\documentclass{amsart}
\usepackage{etex}
\usepackage[all, cmtip]{xy}
\usepackage{amssymb,amsmath,amscd,xy,graphicx,textcomp,mathtools}
\usepackage{epsf} 
\usepackage{hyperref}

\hypersetup{
  colorlinks   = true, 
  urlcolor     = blue, 
  linkcolor    = blue, 
  citecolor   = blue 
}

\newtheorem{theorem}{Theorem}[section]
\newtheorem{lemma}[theorem]{Lemma}

\newtheorem{definition}[theorem]{Definition}

\newtheorem{remark}[theorem]{\it Remark}

\newtheorem{proposition}[theorem]{Proposition}

\xyoption{arrow}

\xyoption{matrix}

\setcounter{tocdepth}{1}

\def\SL{\mathrm{SL}}

\def\C{\mathbb{C}}
\def\cC{\mathcal{C}}
\def\R{\mathbb{R}}
\def\Z{\mathbb{Z}}

\def\P{\mathcal{P}}
\def\Q{\mathbb{Q}}
\def\SL{\textup{SL}}

\def\tree{\mathcal{T}}

\pagestyle{plain}
\begin{document}
\title[Gorenstein projective coordinate rings]{The Gorenstein property for projective coordinate rings of the moduli of parabolic $\SL_2$-principal bundles on a smooth curve}

\author{Theodore Faust, Christopher Manon}
\thanks{The 2nd author was supported by NSF grant DMS 1500966}
\date{\today}

\begin{abstract}
Using combinatorial methods, we determine that a projective coordinate ring of the moduli of parabolic principal $\SL_2$-bundles on a marked projective curve is not Gorenstein when the genus and number of marked points are greater than $1.$  
\end{abstract}

\maketitle

\tableofcontents


\section{Introduction}

Let $C$ be a smooth, projective, complex curve of genus $g$ and let $\bar{p} = \{p_1, \ldots, p_n\} \subset C$ be $n$ distinct points, called \emph{marked points}.  We select a non-negative integer $L \in \Z_{\geq 0}$ called the \emph{level}, and for each point $p_i$ we choose a non-negative integer $r_i \in \Z_{\geq 0}$ such that $r_i \leq L$.  This data determines a line bundle $\mathcal{L}(\vec{r}, L)$ on the moduli $\mathcal{M}_{C, \bar{p}}(\SL_2)$ of parabolic $\SL_2$-principal bundles on $(C, \bar{p})$ (\cite{P}, \cite{KNR}, \cite{BL}).  We let $R_{C, \bar{p}}(\vec{r}, L)$ be the section ring of $\mathcal{L}(\vec{r}, L)$.  Our main theorem concerns the Gorenstein property for the algebra $R_{C, \bar{p}}(\vec{r}, L)$. 

\begin{theorem}\label{main}
Let $g \geq 2$. If  $L = 1, 2, 4$, $\vec{r}$ has at most one non-zero entry, and this entry is equal to $L$, then $R_{C, \bar{p}}(\vec{r}, L)$ is Gorenstein for a generic curve $(C, \bar{p})$.   If this condition is not satisfied, then $R_{C, \bar{p}}(\vec{r}, L)$ is not Gorenstein for all curves $(C, \bar{p})$. 
\end{theorem}

\noindent
Let $\mathcal{M}_{g, n}$ be the moduli of curves $(C, \bar{p})$ of genus $g$ with $n$ points.  The term \emph{generic} in Theorem \ref{main} means the result holds for all curves $(C, \bar{p})$ in the complement of a countable union of closed substacks of $\mathcal{M}_{g, n}$.  

There has been steady progress in understanding the commutative algebra of the rings $R_{C, \bar{p}}(\vec{r}, L)$. The main result in Abe's paper \cite{A} gives a bound on the degree of generators needed to present this ring in the case $\vec{r} = {\bf 0}$ for generic curves $C$.  The second author extends this result to bounds on generators and relations in \cite[Theorem 1.5]{M4}, and introduces the sheaf of the algebra of conformal blocks (see below) and the method of toric degeneration as tools to study the projective coordinate rings of the moduli of principal $G$-bundles for a general simple, simply-connected group $G$.  Belkale and Gibney \cite{BelkaleGibney} show that these rings are always finitely generated when $G$ is type $A$.  Results of this paper also recover the $\SL_2$ case of a result of Kumar and Narasimhan \cite{KN} which shows that the coarse moduli of $G$-bundles (not parabolic) is a Gorenstein variety.  We also mention an important result of Mehta and Ramadas \cite{MehtaRamadas} which shows that the local rings of the coarse moduli of parabolic principal $\SL_2$-bundles are all Cohen-Macaulay in positive characteristic.  We focus on the $\SL_2$ case in this paper, but our techniques could be extended to the $\SL_3$ case using results of the second author in \cite{M10}.  

If $R$ is a graded $\C$-algebra of dimension $d$ which is the homomorphic image of the polynomial ring $S = \C[x_1, \ldots, x_n]$,  $R$ is said to be \emph{Gorenstein} if its canonical module $\Omega(R) = Ext_S^{n-d}(R, S)$ is isomorphic to a shifted copy $R(a)$ of $R$ as a graded module (see \cite[Theorem 3.3.7]{BH}).  If $X = Proj(R) \subset \mathbb{P}^n_\C$, and $R$ is Gorenstein, $X$ is said to be \emph{arithmetically Gorenstein} (AG).   Arithmetically Gorenstein projective schemes have a number of nice geometric and combinatorial properties, in particular all of their singularities are rational.  Moreover, the Betti numbers of  $R$ (as an $S$-module) satisfy a type of Poincar\'e duality: $\beta_i(R) = \beta_{n- d - i}(R)$. 

We prove Theorem \ref{main} by applying Stanley's work on Hilbert series of Cohen-Macaulay and Gorenstein domains \cite{stanley78}  to the Hilbert function $\phi^{g, n}_{\vec{r}, L}$ of $R_{C, \bar{p}}(\vec{r}, L)$.   The graded components of $R_{C, \bar{p}}(\vec{r}, L)$ can be viewed as spaces of $\mathfrak{sl}_2$-\emph{conformal blocks} (see \cite{P}, \cite{M4}), allowing us to use tools from representation theory to compute their dimensions. In particular, the algebra $R_{C, \bar{p}}(\vec{r}, L)$ is a sub-algebra of the $\Z^{n+1}$-graded algebra $\mathcal{V}^\dagger_{C,\bar{p}}$ of $\mathfrak{sl}_2$-\emph{conformal blocks} (see \cite{M4}).  The multigraded Hilbert function $\psi_{g, n}$ of $\mathcal{V}^\dagger_{C, \bar{p}}$ depends only on $g$ and $n$, and we have $\psi_{g, n}(N\vec{r}, NL) = \phi^{g, n}_{\vec{r}, L}(N)$.  Consequently, the Hilbert functions of the $R_{C, \bar{p}}(\vec{r}, L)$ are constant as $(C, \bar{p})$ varies in the moduli $\mathcal{M}_{g, n}$. 

As a step to proving Theorem \ref{main} we prove that $\mathcal{V}^\dagger_{C, \bar{p}}$ is Gorenstein for generic $(C, \bar{p})$ in Section \ref{poly}.  The $\mathcal{V}^\dagger_{C, \bar{p}}$ are the fibers of a $\Z^{n+1}$-graded sheaf of algebras $\mathcal{V}^\dagger$ which extends to a flat sheaf of algebras on the Deligne-Mumford compactification $\mathcal{M}_{g, n}\subset \overline{\mathcal{M}}_{g, n}$ (\cite[Proposition 1.6]{M4}).  Special points of $\overline{\mathcal{M}}_{g, n}$ correspond to graphs $\Gamma$, possibly with loops and multiedges, with first Betti number $g$ and $n$ leaves, whose vertices have degree $1$ or $3$. Such a graph $\Gamma$ is called \emph{trivalent}, the sets of edges, leaves, and vertices of $\Gamma$ are denoted $E(\Gamma)$, $L(\Gamma)$, and $V(\Gamma)$, respectively, and $(C_\Gamma, \bar{p}_\Gamma)$ denotes the corresponding stable curve.   A toric degeneration of the fiber $\mathcal{V}^\dagger_{C_\Gamma, \bar{p}_\Gamma}$ to an affine semigroup algebra $\C[S_\Gamma]$ is also constructed in \cite[Theorem 1.3]{M4}.  Following this construction, $R_{C_\Gamma, \bar{p}_\Gamma}(\vec{r}, L)$ degenerates to an affine semigroup algebra $\C[S_{\Gamma, \vec{r}, L}]$ for a certain polytope $\mathcal{P}_\Gamma(\vec{r}, L)$.  Using these constructions, we are effectively able to study $\phi^{g, n}_{\vec{r}, L}$ and $R_{C, \bar{p}}(\vec{r}, L)$ using combinatorial methods. 

These techniques are similar to those used by the $2$nd author in \cite{M3}, where a classification of arithmetically Gorenstein moduli of weighted points on the projective line is given.  The projective coordinate rings studied in \cite{M3} are the subcase of the $g = 0$ case when $L$ is very large compared to the $r_i$ of the problem we study here (see \cite[Section 4]{M4}).  The main theorem of \cite{M3} is an indication that establishing a classification of Gorenstein $R_{\mathbb{P}^1, \bar{p}}(\vec{r}, L)$ is more subtle than the higher genus cases.   For $g = 0$ the graphs we use above are trivalent trees $\tree$. We let $n = |L(\tree)|$ be the number of leaves of such a tree. In \cite[Section 2]{M3} it is shown that any lattice point $\omega \in P_{\tree}(\vec{r}, L)$ corresponds to a unique planar multigraph $T_\tree(\omega)$ on $n$ vertices.  We let $N^\tree_{ij}(\omega)$ be the number of edges between the $i$-th and $j$-th vertices of $T_\tree(\omega)$.  We let $\vec{2}$ and $2_\tree$ denote the vector and weighting of the edges of $\tree$ where each entry is $2$, respectively.  

\begin{theorem}[\cite{M3}, Theorem 1.7]
Let $L$ be sufficiently large, then $R_{\mathbb{P}^1, \bar{p}}(\vec{r}, L)$ is Gorenstein if and only if the following hold:\\
\begin{enumerate}
\item for some $a > 0$ we can write $a\vec{r} = \vec{2} + \vec{R}$, where either $R_i = \sum_{j \neq i} R_j$ or the triangle inequality holds for some $R_i, R_j, R_k$ and $R_\ell = 0$ for $\ell \neq i, j, k$,\\
\item there exists a compatible tree $\tree$ such that $N^\tree_{ij}(\omega - 2_\tree) \geq n-4$ when this quantity is non-zero, where $\omega$ is the unique interior lattice point in $\P_\tree(a\vec{r}, aL)$.\\
\end{enumerate}
\end{theorem}

\section{The Gorenstein property}\label{background}

In this section we prove a technical result which allows us to obtain information about a general fiber $R_{C, \bar{p}}(\vec{r}, L)$ of a sheaf of algebras on the moduli $\overline{\mathcal{M}}_{g, n}$ from a degeneration $\C[P_{\Gamma}(\vec{r}, L)]$ of a special fiber $R_{C_\Gamma, \bar{p}_\Gamma}(\vec{r}, L)$ (see \cite[Corollary 1.10]{M4}). We also review the combinatorial characterization of the Gorenstein property for affine semigroup algebras.

\subsection{The Gorenstein property in flat families}

 Let $R = \bigoplus_{N \geq 0} R_N$ be a positively graded Noetherian algebra with $R_0 = \C$ and Krull dimension $\dim(R) = d$.  Recall that a set of homogeneous elements $\{a_1, \ldots, a_d\} \subset R$ is called a \emph{homogeneous system of parameters} (\emph{hsop}) if the quotient vector spaces $R/\C[a_1, \ldots, a_d]$ is finite-dimensional, where $\C[a_1, \ldots, a_d] \subset R$ is the subalgebra generated by $\{a_1, \ldots, a_d\}$.  We recall a characterization of the Cohen-Macaulay (CM) property for algebras $R$ using the Hilbert series $\Phi_R(t)$ of $R$, see \cite[Corollary 3.2, Theorem 4.4]{stanley78}.  

\begin{theorem}\label{stanleyCM}
For any algebra $R$ as above and an \emph{hsop} $\{a_1, \ldots, a_d\} \subset R$ with $deg(a_i) = f_i$, there is a coefficient-wise inequality:
\begin{equation}
\Phi_R(t) \leq \frac{\Phi_{R/\langle a_1, \ldots, a_d\rangle}(t)}{\prod_{i =1}^d (1-t^{f_i})},\\
\end{equation}

\noindent
where $\langle a_1, \ldots, a_d \rangle \subset R$ is the ideal generated by the \emph{hsop}.  This is a coefficient-wise equality if and only if $R$ is Cohen-Macaulay.   Moreover, if $R$ is Cohen-Macaulay, then $R$ is Gorenstein if and only if $\Phi_R(t^{-1}) = (-1)^dt^a\Phi_R(t)$ for some $a$. 
\end{theorem}

We let $\mathcal{U}$ be a reduced, irreducible, affine scheme over $\C$, and $\mathcal{R} = \bigoplus_{N \geq 0} \mathcal{R}_N$ be a sheaf of graded $\mathcal{O}_\mathcal{U}$-algebras.  We further assume that each graded component $\mathcal{R}_N$ is a coherent locally free sheaf, and we suppose that the fiber $\mathcal{R}_p = R$ at a closed point $p \in \mathcal{U}$ is a finite-type Cohen-Macaulay $\C$-algebra.  These assumptions immediately imply that the Hilbert series $\Phi_{\mathcal{R}_q}(t)$ and Hilbert functions $\phi_{\mathcal{R}_q}(N)$ are independent of the choice of point $q \in \mathcal{U}$. As an example, following \cite[Theorem 1.3, Corollary 1.10]{M4}, we can take $\mathcal{R}_p = R_{C_\Gamma, \bar{p}_\Gamma}(\vec{r}, L)$, and $\mathcal{R}_q = R_{C, \bar{p}}(\vec{r}, L)$ for curves $(C, \bar{p})$ varying in an affine patch $\mathcal{U} \to \overline{\mathcal{M}}_{g, n}$. 

\begin{proposition}\label{familyproperties}
Suppose that $\mathcal{R}$ is a graded sheaf of algebras with $\mathcal{R}_p = R$ a CM algebra generated in degree $\leq L$. Then there is a complement of a countable number of closed subsets $\mathcal{U}^\circ \subset \mathcal{U}$ such that the fiber $\mathcal{R}_q$ is generated in degree $\leq L$ and CM for any $q \in \mathcal{U}^\circ$.  Moreover, if $R$ is Gorenstein, this also holds for all $q \in \mathcal{U}^\circ$. 
\end{proposition}

\begin{proof}
Suppose that $R$ is generated in degrees $\leq L$, and let $\mathcal{R}' \subset \mathcal{R}$ be the sub $\mathcal{O}_\mathcal{U}$-algebra generated by these graded components. For any $N$, the quotient $\mathcal{R}_N/\mathcal{R}'_N$ is $0$ in an open neighborhood $p \in \mathcal{U}_N \subset \mathcal{U}$. As a consequence, $\mathcal{R}'_q = \mathcal{R}_q$ for $q \in \bigcap_{N \geq 0} \mathcal{U}_N = \mathcal{U}'$. Now we choose $s_i \in \Gamma(\mathcal{U}, \mathcal{R}_{f_i})$ with $s_i\!\!\mid_p = a_i \in R$.  Let $\mathcal{O}_\mathcal{U}[s_1, \ldots, s_d] \subset \mathcal{R}$ be the sub $\mathcal{O}_\mathcal{U}$-algebra generated by the $s_i$. The quotient sheaf $\mathcal{R}_N/\mathcal{O}_\mathcal{U}[s_1, \ldots, s_d]_N$ is $0$ at $p$ for $N >> 0$.  Using an argument similar to the above one, we can find $\mathcal{U}'' \subset \mathcal{U}'$ where the specializations $\{s_1\!\!\mid_q, \ldots, s_d\!\!\mid_q\} \subset \mathcal{R}_q$ form an \emph{hsop} for every $q \in \mathcal{U}''$.  For any $q \in \mathcal{U}''$ we have:
\begin{equation}\label{ineq1}
\Phi_R(t) = \Phi_{\mathcal{R}_q}(t) \leq \frac{\Phi_{\mathcal{R}_q/\langle s_1\!\mid_q, \ldots, s_d\!\mid_q\rangle}(t)}{\prod_{i =1}^d (1-t^{f_i})}.
\end{equation}

\noindent
It follows that $\phi_{\mathcal{R}_p/\langle s_1\!\mid_p, \ldots, s_d\!\mid_p\rangle}(N) = \phi_{R/\langle a_1, \ldots, a_d\rangle}(N) \leq \phi_{\mathcal{R}_q/\langle s_1\!\mid_q, \ldots, s_d\!\mid_q\rangle}(N)$ for all $q \in \mathcal{U}''$.  Recall that $\mathcal{U}''$ is dense, so by semicontinuity it follows that this inequality holds for all $q \in \mathcal{U}$.  Moreover, the set of points where $\phi_{R/\langle a_1, \ldots, a_d\rangle}(N) < \phi_{\mathcal{R}_q/\langle s_1\!\mid_q, \ldots, s_d\!\mid_q\rangle}(N)$ is closed. As a consequence there is a $\mathcal{U}^\circ \subset \mathcal{U}''$, also the complement of a countable number of closed subsets, where (\ref{ineq1}) is an equality.  Theorem \ref{stanleyCM} then implies that $\mathcal{R}_q$ is CM for all $q \in \mathcal{U}^\circ$.  Now Theorem \ref{stanleyCM} (namely \cite[Theorem 4.4]{stanley78}) implies that $\mathcal{R}_q$ is Gorenstein for any such $q$ if and only if $\mathcal{R}_p$ is Gorenstein. 
\end{proof}

\subsection{Affine semigroups and the Gorenstein property}

Stanley's result in \cite{stanley78} illustrates the principle that the combinatorics of a grading on a Cohen-Macaulay algebra can be used to determine if that algebra has the Gorenstein property.  The characterization of the Gorenstein property for a normal affine semigroup algebra is an extremal instance of this.  Let $\mathcal{C} \subset \R^n$ be a \emph{positive polyhedral cone pointed at the origin}, which is \emph{integral with respect to a lattice} $\mathcal{M} \subset \R^n$, defined as:\\

\begin{enumerate}
\item $\mathcal{C}$ is the intersection of a finite number of half spaces $H_{F_i} = \{v \in \R^n | F_i(v) \geq 0\}$, $F_i: \R^n \to \R$ is linear, and the extremal rays of $\mathcal{C}$ all contain a point from $\mathcal{M}$,\\
\item There is a linear function $F: \R^n \to \R$ such that $F(w) \geq 0$ for all $w \in \mathcal{C}$. \\
\end{enumerate}

We let $S_\cC$ be the affine semigroup $\cC\cap \mathcal{M}$.  By formally adjoining scalars from $\C$, we obtain the affine semigroup algebra $\C[S_\cC]$. The algebra $\C[S_\cC]$ is finitely generated \cite[Proposition 6.1.2]{BH}, normal \cite[Theorem 6.1.4]{BH}, and Cohen-Macaulay, \cite[Theorem 6.3.5]{BH}.  We let $int(\mathcal{C})$ denote the set of relative interior points of the cone $\mathcal{C}$. For the following see \cite[Corollary 6.3.8]{BH}.

\begin{proposition}\label{affinesemigroupgorenstein}
With the assumptions above, the affine semigroup algebra $\C[S_\cC]$ is Gorenstein if and only if the set $int(\mathcal{C}) \cap \mathcal{M}$ is equal to the set $\omega + S_\cC$ for some $\omega \in S_\cC$. 
\end{proposition}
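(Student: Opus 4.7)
The plan is to derive the characterization directly from the Danilov--Stanley description of the canonical module of a normal affine semigroup algebra. By \cite[Theorem 6.3.5]{BH}, $k[C]$ is Cohen--Macaulay and its canonical module is realized concretely inside the group algebra $k[\mathcal{M}]$ as
\begin{equation*}
\Omega(k[C]) \;=\; \bigoplus_{m \in int(\mathcal{C})\cap \mathcal{M}} k\cdot x^m,
\end{equation*}
with the $k[C]$-module structure inherited from multiplication in $k[\mathcal{M}]$ and the grading induced by the linear function $F$. Since $k[C]$ is Cohen--Macaulay, it is Gorenstein if and only if $\Omega(k[C])$ is isomorphic, as a graded $k[C]$-module, to a twisted rank-one free module $k[C](a)$, equivalently, if and only if $\Omega(k[C])$ is cyclic over $k[C]$.

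Next I would exploit the finer $\mathcal{M}$-grading to upgrade cyclicity to the existence of a single monomial generator. The module $\Omega(k[C])$ is $\mathcal{M}$-homogeneous inside $k[\mathcal{M}]$, so any $\mathcal{M}$-homogeneous generating set consists of monomials $x^m$ with $m \in int(\mathcal{C})\cap\mathcal{M}$. A $\Z$-graded cyclic generator can be decomposed into its $\mathcal{M}$-homogeneous pieces, each a scalar multiple of a monomial; a minimality argument then shows that one of those monomials $x^\omega$ already generates $\Omega(k[C])$.

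With this in place, the equality $\Omega(k[C]) = x^\omega \cdot k[C]$ inside $k[\mathcal{M}]$ translates, on the level of monomial supports, to
\begin{equation*}
int(\mathcal{C}) \cap \mathcal{M} \;=\; \omega + C.
\end{equation*}
The condition $\omega \in C$ is then automatic: $\omega = \omega + 0 \in \omega + C = int(\mathcal{C})\cap\mathcal{M} \subset \mathcal{C}\cap\mathcal{M} = C$. Conversely, given such an $\omega$, the $k[C]$-linear map $k[C] \to \Omega(k[C])$ sending $x^c$ to $x^{c+\omega}$ is a bijection on monomial bases and thus an isomorphism of graded modules, realizing $\Omega(k[C]) \cong k[C](-F(\omega))$ and establishing the Gorenstein property.

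The main obstacle is the argument in the second paragraph: passing from cyclicity in the coarser $\Z$-grading given by $F$ to cyclicity by a single monomial in the finer $\mathcal{M}$-grading. Once this bookkeeping is handled, the rest of the proof amounts to unwinding the Danilov--Stanley formula and matching monomial supports.
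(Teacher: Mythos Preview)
Your proposal is correct and follows exactly the route the paper has in mind: the paper does not give its own argument for this proposition but simply records it as a consequence of \cite[Theorem 6.3.5]{BH}, and your write-up is precisely the standard unwinding of that Danilov--Stanley description of the canonical module. The ``obstacle'' you flag in the second paragraph is not a real difficulty: since $\Omega(k[C])$ is $\mathcal{M}$-graded and finitely generated, a minimal generating set may be taken $\mathcal{M}$-homogeneous, and its cardinality is $\dim_k \Omega(k[C])/\mathfrak{m}\,\Omega(k[C])$, which is $1$ exactly when the module is cyclic, regardless of which grading one uses to phrase cyclicity.
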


\noindent
Rephrased, Proposition \ref{affinesemigroupgorenstein} says that in order to show that $\C[S_\cC]$ is Gorenstein we must be able to find some interior point $\omega$ such that $v - \omega \in S_\cC$ for any $v \in int(\mathcal{C}) \cap \mathcal{M}$.

A convex polytope $\P \subset \R^n$ is said to be a \emph{lattice polytope} with respect to $\mathcal{M} \subset \R^n$ if all vertices of $\mathcal{P}$ are in $\mathcal{M}$.  There is a pointed integral polyhedral cone $\mathcal{C}_{\P}$ associated to $\P$ obtained by taking the $\R_{\geq 0}$ span of the vertices of $\P\times \{1\} \subset \R^{n+1}$.   Identifying $\R^n \times \{L\} \subset \R^{n+1}$ with $\R^n$, the level set $\mathcal{C}_{\P} \cap \R^n\times \{L\}$ of this cone is isomorphic to the Minkowski sum $L \circ \P = \P + \cdots + \P$. The intersection $S_\P = \mathcal{C}_{\P} \cap \mathcal{M}\times\Z \subset \R^{n+1}$ is then a graded normal affine semigroup.  We let $\C[S_\P]$ denote the corresponding affine semigroup algebra.  Observe that the $N$-th value of the Hilbert function of $\C[S_\P]$ is the number of lattice points in $L \circ \P$.  The following is a consequence of Proposition \ref{affinesemigroupgorenstein}.

\begin{proposition}\label{gradedaffinesemigroupgorenstein}
 The algebra $\C[S_\P]$ is Gorenstein if and only if some Minkowski sum $a \circ \P$ contains a unique interior lattice point $\omega \in int(a\P) \cap \mathcal{M}$ such that for any interior point $v \in int(L \circ \P) \cap \mathcal{M}$ there is a lattice point $u \in (a - L)\circ \P\cap \mathcal{M}$ such that $u + \omega = v$. 
\end{proposition}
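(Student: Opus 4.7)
The plan is to derive the graded statement directly from Proposition \ref{affinesemigroupgorenstein} applied to the cone $\mathcal{C}_{\mathcal{P}} \subset \mathbb{R}^{n+1}$ with lattice $\mathcal{M}\times\mathbb{Z}$, and then to read off the resulting set equality slice by slice using the grading by the last coordinate.

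First I would verify that $(\mathcal{C}_{\mathcal{P}},\mathcal{M}\times\mathbb{Z})$ satisfies the hypotheses of Proposition \ref{affinesemigroupgorenstein}: the extremal rays of $\mathcal{C}_{\mathcal{P}}$ are generated by the points $(v,1)$ for $v$ a vertex of $\mathcal{P}$, and these lie in $\mathcal{M}\times\mathbb{Z}$ because $\mathcal{P}$ is a lattice polytope; and the linear functional $(v,t)\mapsto t$ is strictly positive on $\mathcal{C}_{\mathcal{P}}\setminus\{0\}$, so the cone is pointed. Proposition \ref{affinesemigroupgorenstein} then says that $k[P]$ is Gorenstein if and only if there exists $\omega \in C_P$ with
\[
\mathrm{int}(\mathcal{C}_{\mathcal{P}})\cap(\mathcal{M}\times\mathbb{Z}) \;=\; \omega + C_P.
\]

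Next I would exploit the grading by the last coordinate. Identifying $\mathbb{R}^n\times\{L\}$ with $\mathbb{R}^n$, the slice $\mathcal{C}_{\mathcal{P}}\cap(\mathbb{R}^n\times\{L\})$ is exactly $L\circ\mathcal{P}$. Since $\mathcal{C}_{\mathcal{P}}$ is full dimensional in $\mathbb{R}^{n+1}$ and the hyperplane $\mathbb{R}^n\times\{L\}$ meets its interior for every $L\geq 1$, the relative interior of $L\circ\mathcal{P}$ is precisely $\mathrm{int}(\mathcal{C}_{\mathcal{P}})\cap(\mathbb{R}^n\times\{L\})$. Thus the degree $L$ part of the left hand side is $\mathrm{int}(L\circ\mathcal{P})\cap\mathcal{M}$. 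If $\omega$ has degree $a$, the degree $L$ part of $\omega + C_P$ is $\omega + ((L-a)\circ\mathcal{P}\cap\mathcal{M})$, which is empty for $L<a$ and equals $\{\omega\}$ for $L=a$. Matching the two descriptions degree by degree yields: $\omega_a := \omega$ is the unique interior lattice point of $a\circ\mathcal{P}$, and for every $L\geq a$ and every $v\in\mathrm{int}(L\circ\mathcal{P})\cap\mathcal{M}$ the difference $u = v - \omega_a$ lies in $(L-a)\circ\mathcal{P}\cap\mathcal{M}$. This is exactly the criterion in the statement.

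The one technical ingredient, which I would mention once and then use freely, is the interior/slice identification $\mathrm{int}(\mathcal{C}_{\mathcal{P}})\cap(\mathbb{R}^n\times\{L\}) = \mathrm{int}(L\circ\mathcal{P})$; this is standard convex geometry for a full dimensional cone intersected transversely by a hyperplane meeting its interior. Beyond that, the proof is purely a translation between the ungraded semigroup criterion and its graded reformulation, so I expect no serious obstacle.
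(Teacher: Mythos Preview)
Your proposal is correct and matches the paper's approach: the paper does not give a separate proof but simply introduces the proposition as ``a graded rephrasing of \cite[Theorem 6.3.5]{BH},'' i.e.\ of the ungraded criterion in Proposition~\ref{affinesemigroupgorenstein}, which is exactly the translation you carry out slice by slice. Your write-up in fact supplies the details the paper leaves implicit, including the correction that the summand $u$ should lie in $(L-a)\circ\mathcal{P}$ rather than $(a-L)\circ\mathcal{P}$.
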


\noindent
The negative $-a$ of the degree of the unique interior generator $\omega$ is called the \emph{$a$-invariant} of $\C[S_\P]$. 


\section{The polytope $\mathcal{P}_{\Gamma}(\vec{r}, L)$}\label{poly}

Recall that a graph $\Gamma$ is \emph{trivalent} if each vertex of $\Gamma$ has degree $1$ or $3$ (recall also that we do not require any other restrictions on $\Gamma$; multiple edges and loops are allowed). A \emph{leaf-edge} is an edge of $\Gamma$ for which (at least) one of its vertices has degree $1$. We denote by $V(\Gamma)$, $L(\Gamma)$, and $E(\Gamma)$ the set of vertices, set of leaves, and set of edges of $\Gamma$, respectively. In \cite[Definitions 1.2 and 1.4]{M4} the polytope $\mathcal{P}_{\Gamma}(\vec{r}, L)$ is defined as follows.

\begin{definition}\label{polydef}

Let $\Gamma$ be a a trivalent graph with $n$ leaves $\{\ell_1, \ldots, \ell_n\}$, and let $\vec{r}=(r_1,\cdots,r_n)$ be an $n$-tuple of non-negative integers. For a non-negative integer $L$, $\mathcal{P}_{\Gamma}(\vec{r}, L)$ is the set of functions $w: E(\Gamma) \to \R$ which satisfy four conditions:\\

\begin{enumerate}
\item $w(e) \geq 0$ for all $e \in E(\Gamma)$,\\
\item (Triangle inequalities) For any three edges $e, f, g$ meeting at a vertex $|w(e) - w(g)| \leq w(f) \leq w(e) + w(g)$,\\
\item For any three edges $e, f, g$ meeting at a vertex $w(e) + w(f) + w(g) \leq 2L$,\\
\item For $\ell_i$ the $i$-th leaf-edge of $\Gamma$, $w(\ell_i) = r_i$.\\
\end{enumerate}

\noindent
The polytope $\mathcal{P}_{\Gamma}(L)$ is defined by the first three conditions above. 

\end{definition}

We let $\P_\Gamma = \P_\Gamma(1)$.  It can be verified from Definition \ref{polydef} that $L \circ \mathcal{P}_{\Gamma} = \mathcal{P}_{\Gamma}(L)$.   In the special case that $\Gamma$ has exactly one internal vertex and three leaves we say that $\Gamma$ is a \emph{trinode}, and we let $\mathcal{P}_3(L)$ denote the corresponding polytope.  It is straightforward to check that $\mathcal{P}_3(L)$ is the convex hull of $(0, 0, 0), (L, L, 0), (L, 0, L)$, and $(0, L, L)$ in $\R^3$.  These convex bodies are considered in relation to the lattice $\mathcal{M}_{\Gamma} \subset \R^{E(\Gamma)}$ defined by the requirement that $w(e) \in \Z$ for all $e \in E(\Gamma)$ and $w(e) + w(f) + w(g) \in 2\Z$ for any three edges meeting at a vertex.  Let $S_{\Gamma, \vec{r}, L}$, $S_{\Gamma, L}$, and $S_{\Gamma}$ denote the graded affine semigroups associated to $\P_\Gamma(\vec{r}, L)$, $\P_{\Gamma}(L)$, and $\P_\Gamma$, respectively. 


For any curve $(C, \bar{p})$, and any graph $\Gamma$ with first Betti number equal to the genus of $C$ and $|L(\Gamma)| = |\bar{p}|$, the graded domain $R_{C, \bar{p}}(\vec{r}, L)$ can be flatly degenerated to the affine semigroup algebra $\C[S_{\Gamma, \vec{r}, L}]$, and the Hilbert functions of these algebras agree (\cite[Theorem 1.3, Corollary 1.10]{M4}).  The following proposition reduces the proof of Theorem \ref{main} to understanding the Gorenstein property for one of the algebras $\C[S_{\Gamma, \vec{r}, L}]$.  We let $\beta_1(\Gamma)$ denote the first Betti number of a graph $\Gamma$ (i.e. the number of edges in the complement of a spanning tree of $\Gamma$).

\begin{proposition}\label{indep}
The Hilbert function of $\C[S_{\Gamma, \vec{r}, L}]$ only depends on $g = \beta_1(\Gamma)$ and $n = |L(\Gamma)|$.  For any graphs $\Gamma, \Gamma'$ with $\beta_1(\Gamma) = \beta_1(\Gamma')$ and $|L(\Gamma)| = |L(\Gamma')|$, $\C[S_{\Gamma, \vec{r}, L}]$ is Gorenstein if and only of if $\C[S_{\Gamma', \vec{r}, L}]$ is Gorenstein.  If $\C[S_{\Gamma, \vec{r}, L}]$ is Gorenstein, then $R_{C, \bar{p}}(\vec{r}, L)$ is Gorenstein for generic $(C, \bar{p}) \in \mathcal{M}_{g, n}$.  If $\C[S_{\Gamma, \vec{r}, L}]$ is not Gorenstein, then $R_{C, \bar{p}}(\vec{r}, L)$ is not Gorenstein for all curves $(C, \bar{p}) \in \mathcal{M}_{g, n}$.
\end{proposition}

\begin{proof}
For any graph $\Gamma$ with $g = \beta_1(\Gamma)$ and $n = |L(\Gamma)|$,  $\phi^{g, n}_{\vec{r}, L}(N)$ is the number of lattice points in $N \circ \P_\Gamma(\vec{r}, L)$.  The algebras $\C[S_{\Gamma, \vec{r}, L}]$ are Cohen-Macaulay, so by Theorem \ref{stanleyCM}, $\C[S_{\Gamma, \vec{r}, L}]$ is Gorenstein if and only if $\C[S_{\Gamma', \vec{r}, L}]$ is Gorenstein.  Now, by \cite[Proposition 3.3]{M4}, there is a sheaf of algebras over $\mathbb{A}^1_\C$ with fiber $\C[S_{\Gamma, \vec{r}, L}]$ at $0$ and $R_{C_\Gamma, \bar{p}_\Gamma}(\vec{r}, L)$ at all $c \neq 0$.  By Proposition \ref{familyproperties} it follows that $\C[S_{\Gamma, \vec{r}, L}]$ is Gorenstein if and only if all fibers of this family are Gorenstein.  Moreover, if any fiber $R_{C, \bar{p}}(\vec{r}, L)$ is Gorenstein, then $\C[S_{\Gamma, \vec{r}, L}]$ is Gorenstein as well, as these two algebras share the same Hilbert function.   Finally, if $R_{C_\Gamma, \bar{p}_\Gamma}(\vec{r}, L)$ is Gorenstein, then Proposition \ref{affinesemigroupgorenstein} implies that $R_{C, \bar{p}}(\vec{r}, L)$ is Gorenstein for generic $(C, \bar{p})$.  
\end{proof}

\subsection{Interior points}

 In light of Proposition \ref{affinesemigroupgorenstein} we determine a necessary and sufficient condition that a point is an interior point in $\P_\Gamma(L)$.

\begin{proposition}\label{interior}
A point $w \in \mathcal{P}_{\Gamma}(L)$ is an interior point if and only if the inequalities $(1)-(3)$ in Definition \ref{polydef} are strict on $w$.
\end{proposition}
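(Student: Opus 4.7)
The plan is to view $\mathcal{P}_\Gamma(L) \subset \R^{E(\Gamma)}$ as the intersection of the closed half-spaces cut out by the finitely many linear inequalities in Definition \ref{polydef}, and then apply standard convex-analysis reasoning to relate interior points to strict inequalities.

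The $(\Leftarrow)$ direction is immediate. If every defining inequality $A_i(w) \leq b_i$ is strict at $w$, i.e.\ $b_i - A_i(w) > 0$ for all $i$, then continuity of the finitely many linear functionals $A_i$ yields an open ball around $w$ on which every inequality still holds strictly, so this ball sits inside $\mathcal{P}_\Gamma(L)$ and $w$ is interior. For the $(\Rightarrow)$ direction I would argue by contrapositive. Suppose some inequality $A(w) \leq b$ from Definition \ref{polydef} is saturated at $w$, meaning $A(w) = b$. Each such $A$ --- namely $w \mapsto -w(e)$, $w \mapsto \pm(w(e) - w(g)) - w(f)$, $w \mapsto w(f) - w(e) - w(g)$, and $w \mapsto w(e) + w(f) + w(g) - 2L$ --- is a nonzero linear functional on $\R^{E(\Gamma)}$, so I can choose some $v \in \R^{E(\Gamma)}$ with $A(v) > 0$. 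Then $A(w + \epsilon v) = b + \epsilon A(v) > b$ for every $\epsilon > 0$, producing points outside $\mathcal{P}_\Gamma(L)$ arbitrarily close to $w$; hence $w$ lies on the boundary.

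The one preliminary point to check is that ``interior'' is unambiguous, i.e.\ that $\mathcal{P}_\Gamma(L)$ is full-dimensional in $\R^{E(\Gamma)}$ so that the topological interior and the relative interior used in Section \ref{background} coincide. I would verify this by exhibiting a single strictly feasible point: the constant function $w \equiv c$ for any $0 < c < 2L/3$ satisfies $w(e) = c > 0$, $|w(e) - w(g)| = 0 < c < 2c = w(e) + w(g)$, and $w(e) + w(f) + w(g) = 3c < 2L$, making all defining inequalities strict simultaneously. The only thing that could go wrong is that some defining inequality is an equality on all of $\mathcal{P}_\Gamma(L)$, which would invalidate the perturbation in the $(\Rightarrow)$ step; the constant-function witness rules this out. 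Apart from this bookkeeping, I do not anticipate a serious obstacle --- the content of the proposition is essentially the standard fact that a full-dimensional polytope's interior is the locus where the irredundant defining inequalities are strict, specialized to the explicit system in Definition \ref{polydef}.
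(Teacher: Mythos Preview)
Your proof is correct and shares its core with the paper's: both reduce the question to exhibiting a strictly feasible point, and both use the constant weighting $w(e)=\epsilon$ with $0<3\epsilon<2L$ as that witness. The difference is only in how the surrounding convex geometry is packaged. You argue directly in $\R^{E(\Gamma)}$ that a full-dimensional polytope has interior equal to its strict-inequality locus. The paper instead realizes $\mathcal{P}_\Gamma(L)$ as the slice of the product $\mathcal{P}_3(L)^{V(\Gamma)}\subset\R^{3|V(\Gamma)|}$ (one tetrahedron per trinode, obtained by splitting every edge) by the diagonal linear subspace that re-identifies the two halves of each split edge, and then invokes the standard fact that an affine slice meeting the interior of a convex body has relative interior equal to the trace of that interior. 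Your route is a bit more self-contained; the paper's product viewpoint has the payoff that it is reused verbatim in Proposition~\ref{fixedinterior}, where fixing the leaf labels $\vec{r}$ simply adds further linear constraints to slice by, and one again only needs to produce an interior witness compatible with them.
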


\begin{proof}
The polytope $\mathcal{P}_{\Gamma}(L)$ can be constructed as an intersection of the polytope $\mathcal{P}_3(L)^{|V(\Gamma)|} \subset \R^{3|V(\Gamma)|}$ with a linear space. We construct $\mathcal{P}_{\Gamma}(L)$ in this way by first splitting each edge $e \in E(\Gamma)$ to create a forest of $|V(\Gamma)|$ trinodes, and we let $e_1, e_2$ denote the pair of edges created by splitting $e \in E(\Gamma)$.  The polytope $\mathcal{P}_{\Gamma}(L)$ is then the subset of those $\bar{w} \in \mathcal{P}_3(L)^{|V(\Gamma)|} \subset \R^{3|V(\Gamma)|}$ satisfying $\bar{w}(e_1) = \bar{w}(e_2)$.  

The interior points of $\mathcal{P}_3(L)^{|V(\Gamma)|}$ are precisely those $\bar{w}$ which make the above inequalities $(1) -(3)$ strict at each trinode.  In order to establish that interior points of $\mathcal{P}_{\Gamma}(L)$ have the same description it suffices to show that $\mathcal{P}_{\Gamma}(L)$ contains one interior point of $\mathcal{P}_3(L)^{|V(\Gamma)|}$.  We construct such a point $\omega$ by choosing $\epsilon \in \Q$ such that $0 < 3\epsilon < 2L$, and defining $\omega(e) = \epsilon$ for all $e \in E(\Gamma)$.  
\end{proof}

A similar argument  can be used to classify interior points of $\mathcal{P}_{\Gamma_{g, n}}(\vec{r}, L)$ for a special trivalent graph $\Gamma_{g, n}$, provided the parameters $\vec{r}$ are chosen strictly between $0$ and $L$.  We let $\Gamma_{g, n}$ denote the graph depicted in Figure \ref{NonGorenstein-8} with $n$ leaves and first Betti number $g$.

\begin{figure}[htbp]
\centering
\includegraphics[scale = 0.35]{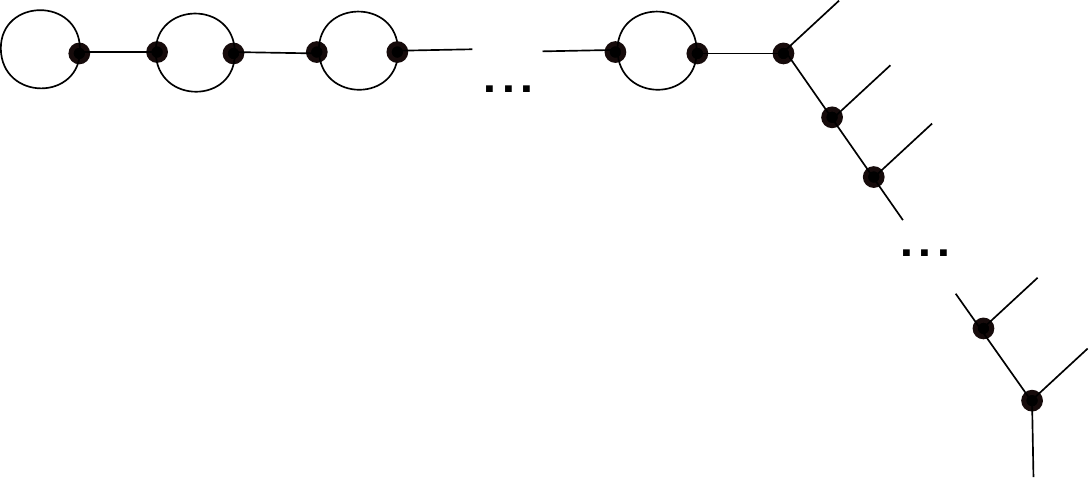}
\caption{The graph $\Gamma_{g, n}$}
\label{NonGorenstein-8}
\end{figure}

\begin{proposition}\label{fixedinterior}
If $\vec{r}=(r_1,\cdots,r_n)$ are any real numbers chosen such that $0 < r_i < L$, then a point $w \in \mathcal{P}_{\Gamma_{g, n}}(\vec{r}, L)$ is an interior point if and only if all inequalities in Definition \ref{polydef} are strict. 
\end{proposition}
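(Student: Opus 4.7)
The plan is to mirror the proof of Proposition \ref{interior} verbatim, with the single extra ingredient being the additional affine constraints $w(\ell_i) = r_i$. Concretely, I would realize $\mathcal{P}_{\Gamma_{g, n}}(\vec{r}, L)$ as the intersection of the product polytope $\mathcal{P}_3(L)^{V(\Gamma_{g, n})} \subset \R^{3V(\Gamma_{g, n})}$ with two families of affine conditions: the edge-splitting identifications $\bar{w}(e_1) = \bar{w}(e_2)$, and the fixed leaf-value equalities $\bar{w}(\ell_i) = r_i$. The interior of the product polytope is exactly the locus where all three trinode inequalities of Definition \ref{polydef} hold strictly at every vertex, so once a single point of the slice is exhibited in this ambient interior, the (relative) interior of the slice is forced to be its intersection with the strict-inequality locus, which is precisely the content of the proposition.

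For the witness I propose the function $\omega: E(\Gamma_{g, n}) \to \R$ given by $\omega(\ell_i) = r_i$ on each leaf edge and $\omega(e) = L/2$ on every non-leaf edge. Since $\Gamma_{g, n}$ is trivalent, each vertex presents a trinode whose three incident edge values lie in $\{L/2, r_i\}$, and one verifies the strict inequalities case by case. At a vertex where all three incident values equal $L/2$ (purely internal vertices, and loop-vertices whose unique non-loop neighbor is internal), the conditions reduce to $0 < L/2 < L$ and $3L/2 < 2L$. At a vertex where two values are $L/2$ and one is $r_i$ (leaf-vertices, and loop-vertices adjacent to a leaf), the triangle inequalities reduce to $0 < r_i$ and $r_i < L$, and the sum bound reduces to $L + r_i < 2L$. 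All of these follow at once from the standing hypothesis $0 < r_i < L$.

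The main obstacle, and really the only nontrivial step, is precisely the construction of this interior witness. The hypothesis $0 < r_i < L$ is used in an essential way and is sharp: if $r_i = 0$ then inequality (1) fails for every point of the slice, while if $r_i = L$ then at a leaf-vertex the strict triangle inequality forces the value $t$ on the adjacent internal edge to satisfy $t > L/2$, and the strict sum bound forces $t < L/2$, so the slice collapses entirely onto the boundary. Once the existence of $\omega$ is secured, the characterization of interior points by the strict version of (1)--(3) is automatic from the slicing description, exactly as in the proof of Proposition \ref{interior}.
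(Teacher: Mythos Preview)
Your overall strategy matches the paper's: realize $\mathcal{P}_{\Gamma_{g,n}}(\vec{r},L)$ as an affine slice of $\mathcal{P}_3(L)^{V(\Gamma_{g,n})}$ and reduce everything to exhibiting one point of the slice lying in the ambient interior. The gap is in your choice of witness. For $n\geq 2$ the caterpillar (tree) portion of $\Gamma_{g,n}$ begins with a trinode meeting \emph{two} leaf edges $\ell_1,\ell_2$ and one internal edge; this is exactly the trinode the paper writes as $(r_1,r_2,d_1)$. At that vertex your $\omega$ takes the values $(r_1,r_2,L/2)$, and the strict triangle inequality $L/2 < r_1+r_2$ is \emph{not} implied by $0<r_i<L$ (take $r_1=r_2=L/10$); likewise $|r_1-r_2|<L/2$ fails for $r_1=9L/10$, $r_2=L/10$. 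Your case analysis only handles trinodes seeing at most one leaf value, so this vertex is simply missing from the verification, and for such $\vec{r}$ your $\omega$ is not in the interior of $\mathcal{P}_3(L)^{V(\Gamma_{g,n})}$.

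The paper deals with precisely this obstruction by building the internal edge labels along the caterpillar inductively: choose $d_1$ with $(r_1,r_2,d_1)\in int(\mathcal{P}_3(L))$, then $d_2$ with $(d_1,r_3,d_2)\in int(\mathcal{P}_3(L))$, and so on down the chain, and only afterwards assign a small constant $\epsilon$ to the edges in the loop portion. A single uniform value $L/2$ on all non-leaf edges is too rigid; the internal labels have to be adapted to the $r_i$ at the two-leaf end. Once you replace your $\omega$ by such an inductively constructed point (which you can do whenever $0<r_i<L$), the remainder of your argument goes through verbatim.
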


\begin{proof}
It suffices to produce a point $w \in int(\mathcal{P}_{\Gamma_{g, n}}(L))$ which is also in $\mathcal{P}_{\Gamma_{g, n}}(\vec{r}, L)$.  Given two fixed lengths $0 < r \leq s < L$, we select $d$ such that $s-r < d < \textup{min}\{s,L-r\}$.  This choice ensures that all triangle inequalities on $d, r, s$ are strict and $d + r + s < 2L$.   By induction we can then find $d_1, \ldots, d_{n-2}$ so that $(r_1, r_2, d_1)$, $(d_1,r_3,d_2)$, $\ldots$, $(d_{n-3}, r_{n-1}, d_{n-2})$ are all in the interior of $\P_3(L)$.  Now we find any $0 < \epsilon <\textup{min}\{d, \frac{2}{3}L\}$, and assign $\epsilon$ to every other edge. 
\end{proof}

  Using Propositions \ref{interior} and \ref{fixedinterior} we give a classification of the interior lattice points of $\mathcal{P}_{\Gamma}(L)$ and $\mathcal{P}_{\Gamma_{g, n}}(\vec{r}, L)$.   Let $\omega_{\Gamma} \in \mathcal{P}_{\Gamma}(4)$ be the point which assigns $2$ to each edge of $\Gamma$. 

\begin{proposition}\label{interiorlattice}
Suppose that $0 < r_i < L$ for $1 \leq i \leq n$.  A lattice point $w \in \P_{\Gamma}(L)$ is in $int(\mathcal{P}_{\Gamma}(L))$ if and only if $w = \omega_{\Gamma} + u$ for some $u \in \P_{\Gamma}(L - 4)$.   A point $w \in \P_{\Gamma_{g, n}}(\vec{r}, L)$ is in $int(\mathcal{P}_{\Gamma_{g, n}}(\vec{r}, L))$ if and only if $w = \omega_{\Gamma_{g, n}} + u$ for some $u \in \P_{\Gamma_{g, n}}(\vec{r} - \vec{2}, L - 4)$, where $\vec{2}=(2,\cdots,2)$. 
\end{proposition}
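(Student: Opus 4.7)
The plan is to prove both statements by establishing an explicit bijection between interior lattice points of the big polytope and arbitrary lattice points of the down-shifted polytope, via $w \leftrightarrow u = w - \omega_{\Gamma}$. The reverse direction is the easy one: given $u \in P_{\Gamma}(L-4)$ (respectively $P_{\Gamma_{g,n}}(\vec{r}-\vec{2},L-4)$), I would set $w = \omega_{\Gamma} + u$ and verify that each defining inequality of Definition \ref{polydef} is strict at $w$. For instance, at a vertex with edges $e,f,g$, the triangle inequality reads $|u(e)-u(g)| \leq u(f) \leq u(e) + u(g)$ for $u$; adding $2$ to each edge gives $|w(e)-w(g)| = |u(e)-u(g)| \leq u(f) < u(f) + 2 = w(f)$ and $w(f) = u(f) + 2 \leq u(e) + u(g) + 2 < w(e) + w(g)$, so both triangle inequalities become strict. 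Similarly $w(e) + w(f) + w(g) \leq 2(L-4) + 6 = 2L - 2 < 2L$, and $w(e) \geq 2 > 0$. Proposition \ref{interior} (resp.\ \ref{fixedinterior}) then concludes that $w$ is interior.

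The forward direction is where the parity structure of the lattice $\mathcal{M}_{\Gamma}$ does the essential work. Starting from an interior lattice point $w$, Proposition \ref{interior}/\ref{fixedinterior} gives that every inequality in Definition \ref{polydef} is strict. The key observation is that at any vertex with edges $e,f,g$, each of the quantities $w(e) + w(f) - w(g)$, $w(e) + w(g) - w(f)$, $w(f) + w(g) - w(e)$ is an integer, congruent mod $2$ to $w(e)+w(f)+w(g)$, which is even by the lattice condition. Together with strict positivity, this promotes each strict inequality to a gap of at least $2$. Adding any two of these expressions yields $2w(e) \geq 4$, so every edge satisfies $w(e) \geq 2$; the analogous argument applied to $2L - (w(e)+w(f)+w(g)) > 0$ (which is likewise even) gives $w(e)+w(f)+w(g) \leq 2L - 2$.

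Setting $u = w - \omega_{\Gamma}$, all of these bounds translate directly into the defining inequalities of $P_{\Gamma}(L-4)$: $u(e) \geq 0$, the triangle inequalities $|u(e)-u(g)| \leq u(f) \leq u(e)+u(g)$, and $u(e)+u(f)+u(g) \leq 2(L-4)$. The lattice membership $u \in \mathcal{M}_{\Gamma}$ is automatic since $u(e) \in \Z$ and the vertex sums drop by $6$, preserving evenness. For the parabolic statement on $\Gamma_{g,n}$, the argument is identical at every internal vertex, and at a leaf-vertex the fixed value $w(\ell_i) = r_i$ just gets replaced with $u(\ell_i) = r_i - 2$, matching the leaf data of $P_{\Gamma_{g,n}}(\vec{r}-\vec{2}, L-4)$; note that when some $r_i < 2$ both sides of the claimed equivalence are empty (the left because the parity plus strict triangle at the leaf vertex rules out interior lattice points, the right because $P_{\Gamma_{g,n}}(\vec{r}-\vec{2},L-4)$ requires nonnegative leaf values), so the statement holds vacuously.

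The only subtle step is the parity-to-gap upgrade in the forward direction; once one recognizes that the defining lattice $\mathcal{M}_{\Gamma}$ is precisely designed so that the slack in every strict inequality is automatically even, everything else is bookkeeping. I therefore expect no serious obstacle, only care in checking that the shift by $\omega_{\Gamma}$ is compatible with the leaf constraints in the parabolic case.
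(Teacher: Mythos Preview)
Your proposal is correct and follows essentially the same approach as the paper's proof: subtract $\omega_{\Gamma}$ from an interior lattice point and verify that the defining inequalities of $P_{\Gamma}(L-4)$ hold. The paper's argument is considerably terser—it only sketches the forward direction and leaves the parity-to-gap-of-$2$ upgrade implicit—so your write-up, with both directions spelled out and the lattice parity argument made explicit, is in fact more complete than the original.
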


\begin{proof}
By Proposition \ref{fixedinterior} the second claim follows from the first.  Suppose that $w \in \P_{\Gamma}(L)$ is an interior lattice point.  It follows that for any trinode $v \in V(\Gamma)$ with incident edges $e, f, g$, we must have $|w(f) - w(g)| < w(e) < w(f) + w(g)$ and $w(e) + w(f) + w(g) < 2L$.  The weak versions of these inequalities are still satisfied if $2$ is taken from each edge value and $4$ is taken from the level $L$.
\end{proof}

Proposition \ref{interiorlattice} is enough to establish the Gorenstein property for the affine semigroup algebra $\C[S_{\Gamma_{g, n}, L}]$ for certain values of $L$.



\begin{theorem}\label{acbGor}
The algebra $\C[S_{\Gamma, L}]$ is Gorenstein if and only if $L = 1, 2, 4$.  As a consequence, the algebra of conformal blocks $\mathcal{V}^{\dagger}_{C, \bar{p}}$ is Gorenstein for generic $(C, \bar{p})$. 
\end{theorem}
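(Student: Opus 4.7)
The plan is to combine Proposition \ref{interiorlattice} with the graded Gorenstein criterion of Proposition \ref{gradedaffinesemigroupgorenstein} to prove both directions of the equivalence, then derive the conformal blocks statement by applying Proposition \ref{affinesemigroupgorenstein} to the full cone $\mathcal{C}_{\mathcal{P}_\Gamma(1)}$.

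For the sufficiency direction, suppose $L\in\{1,2,4\}$ and set $a := 4/L$, a positive integer. I would take $\omega_a := \omega_\Gamma$, which lies in $\mathcal{P}_\Gamma(aL) = \mathcal{P}_\Gamma(4)$, the $a$-th graded piece of $k[P_\Gamma(L)]$. Applying Proposition \ref{interiorlattice} at level $4$ gives $int(\mathcal{P}_\Gamma(4))\cap\mathcal{M}_\Gamma = \omega_\Gamma + P_\Gamma(0) = \{\omega_\Gamma\}$, so $\omega_a$ is the unique interior lattice point at level $a$. At any higher level $kL$, the same proposition gives $int(\mathcal{P}_\Gamma(kL))\cap\mathcal{M}_\Gamma = \omega_\Gamma + P_\Gamma(kL-4) = \omega_a + P_\Gamma((k-a)L)$, which is exactly the hypothesis of Proposition \ref{gradedaffinesemigroupgorenstein}.

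For the necessity direction, suppose $k[P_\Gamma(L)]$ is Gorenstein with unique interior generator $\omega_a$ at level $a\ge 1$. Proposition \ref{interiorlattice} writes $\omega_a = \omega_\Gamma + u_0$ for some $u_0 \in P_\Gamma(aL-4)$, and at each level $k\geq a$ describes the interior lattice points both as $\omega_\Gamma + P_\Gamma(kL-4)$ and as $\omega_a + P_\Gamma((k-a)L) = \omega_\Gamma + u_0 + P_\Gamma((k-a)L)$. Cancelling $\omega_\Gamma$ and comparing cardinalities gives $|P_\Gamma(kL-4)| = |P_\Gamma((k-a)L)|$ for all $k\gg 0$; matching leading terms of the Ehrhart quasi-polynomials of the dilations $M\cdot\mathcal{P}_\Gamma(1)$ then forces $kL - 4 = (k-a)L$, i.e.\ $aL = 4$. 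Since $a,L\geq 1$ are integers, $L\in\{1,2,4\}$. The main subtlety here is that the parity condition in $\mathcal{M}_\Gamma$ prevents $\mathcal{P}_\Gamma(1)$ from being a lattice polytope in general, so $|P_\Gamma(M)|$ is only a quasi-polynomial in $M$; this is harmless because only the top-degree coefficient (the volume of $\mathcal{P}_\Gamma(1)$) is needed for the conclusion.

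For the conformal blocks corollary, I would apply Proposition \ref{affinesemigroupgorenstein} directly to the full cone $\mathcal{C}_{\mathcal{P}_\Gamma(1)}$: Proposition \ref{interiorlattice} identifies its interior lattice points as a single translate by $\omega_\Gamma$ of all the lattice points of the cone, so the associated affine semigroup algebra is Gorenstein. Since $\mathcal{V}^\dagger_{C,\vec p}$ flatly degenerates to this algebra by \cite{M4}, and the Gorenstein property is preserved under flat degeneration of graded Cohen-Macaulay domains (combining Stanley's criterion \cite[Theorem 6.1]{stanley78} with the Cohen-Macaulay property noted in the earlier remark), $\mathcal{V}^\dagger_{C,\vec p}$ is Gorenstein for generic $(C,\vec p)$.
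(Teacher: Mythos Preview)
Your sufficiency argument and the conformal blocks corollary are fine and match the paper's reasoning. The issue is in your necessity argument.

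The step ``matching leading terms of the Ehrhart quasi-polynomials\ldots then forces $kL-4=(k-a)L$'' does not work as stated. As polynomials in $k$, both $|P_\Gamma(kL-4)|$ and $|P_\Gamma((k-a)L)|$ have the \emph{same} leading term $c_d L^d k^d$ regardless of whether $aL=4$, so the top coefficient gives no information. The constraint $aL=4$ would have to come from the $k^{d-1}$ coefficient, but that coefficient involves the subleading Ehrhart coefficient $c_{d-1}(M)$, which may genuinely depend on the parity of $M$. When $aL-4$ is odd (e.g.\ $L$ and $a$ both odd) the two arguments $kL-4$ and $(k-a)L$ lie in different residue classes, so the periodic part does not cancel and you cannot isolate $aL-4$. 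In other words, the quasi-polynomial issue you flag as ``harmless because only the top-degree coefficient is needed'' is precisely where the argument breaks.

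The paper's route is much shorter and avoids Ehrhart asymptotics entirely. It uses only that $|P_\Gamma(M)|>1$ for every $M\ge 1$. If $k[P_\Gamma(L)]$ is Gorenstein, the minimal graded level $a$ containing an interior lattice point must contain exactly one; by Proposition~\ref{interiorlattice} the interior lattice points of $\mathcal{P}_\Gamma(aL)$ are $\omega_\Gamma+P_\Gamma(aL-4)$, so uniqueness forces $|P_\Gamma(aL-4)|=1$, hence $aL-4=0$ and $L\mid 4$. (For $L=3$ one checks that $\mathcal{P}_\Gamma(3)$ has no interior lattice point while $\mathcal{P}_\Gamma(6)$ already has more than one.) Your own setup actually reaches this in one line: from your decomposition $\omega_a=\omega_\Gamma+u_0$ with $u_0\in P_\Gamma(aL-4)$ and the uniqueness of $\omega_a$, you get $|P_\Gamma(aL-4)|=1$ directly, making the Ehrhart comparison unnecessary.
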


\begin{proof}
The set of lattice points in $\P_{\Gamma}(1)$ always has cardinality bigger than $1$, so it follows that $\mathcal{P}_{\Gamma}(L)$ contains interior points for any $L > 4$. Let $e, f, g \in E(\Gamma)$ meet at a vertex $v \in V(\Gamma)$, then if inequalities $(1)$ and $(2)$ in Definition \ref{polydef} are strict on a lattice point $w \in P_{\Gamma}(L)$, we must have $2 \leq w(e), w(f), w(g)$. It follows that $6 \leq w(e) + w(f) + w(g)$, so the polytope $\mathcal{P}_{\Gamma}(3)$ can not contain interior lattice points by Proposition \ref{interior}. However its $2$-nd Minkowski sum $\mathcal{P}_{\Gamma}(6)$ has $L > 4$, so it must contain more than $1$ interior point by Proposition \ref{interiorlattice}.  The cases $L =1, 2, 4$ all contain $\omega_{\Gamma}$ in their $4$-th, $2$-nd and $1$-st Minkowski sums respectively.  It follows that this point is a summand of all the interior points in the cones over these polytopes.   In particular, $\C[S_{\Gamma}]$ is Gorenstein by Propositions \ref{gradedaffinesemigroupgorenstein} and \ref{interiorlattice}, so by Proposition \ref{familyproperties}, $\mathcal{V}^\dagger_{C_\Gamma, \bar{p}_\Gamma}$ is Gorenstein, and $\mathcal{V}^{\dagger}_{C, \bar{p}}$ is Gorenstein for generic $(C, \bar{p}) \in \mathcal{M}_{g, n}$.
\end{proof}

The proof of Theorem \ref{acbGor} implies that the $a$-invariants of $\C[S_{\Gamma}]$, $\C[S_{\Gamma, 2}]$ and $\C[S_{\Gamma, 4}]$ are $-4, -2$, and $-1$ respectively.  In particular the algebra of conformal blocks $\mathcal{V}^{\dagger}_{C, \bar{p}}$ for generic $(C, \bar{p})$ always has $a$-invariant equal to $-4$, regardless of the genus of $C$ or number of marked points $n=|\bar{p}|$.



\subsection{Polyhedral symmetries}

Next we observe how a class of symmetries of the polytope $\mathcal{P}_{\Gamma}(L)$ affects the multigraded Hilbert function $\psi_{g, n}(\vec{r}, L)$.   Let $C_L: \R^3 \to \R^3$ be the affine map which sends $(x, y, z) \mapsto (L -x, L- y, z)$.

\begin{lemma}\label{sym}
The map $C_L$ permutes the lattice points of $\mathcal{P}_3(L)$ and defines a polyhedral symmetry.   
\end{lemma}

As a consequence of Lemma \ref{sym} a point $(x, y, z)$ is in $\P_3(L)$ if and only if $(L -x, L- y, z)$ is as well. The function $\psi_{0,3}(r_1, r_2, r_3, L)$ is either $1$ or $0$ depending on whether or not $(r_1, r_2, r_3) \in \P_3(L)$, so we conclude that $\psi_{0,3}(r_1, r_2, r_3, L) = \psi_{0,3}(L- r_1, L- r_2, r_3, L)$.   We recall three facts about $\psi_{g, n}(\vec{r}, L)$ which follow from properties of the algebra of conformal blocks $\mathcal{V}^{\dagger}_{C, \bar{p}}$ (see \cite[Proposition 2.3 and Proposition 3.1]{M4}).

\begin{proposition}\label{factorization}
The Hilbert functions $\psi_{g, n}(\vec{r}, L)$ satisfy the ``vacuum propagation'' identity:
\begin{equation}
\psi_{g, n+1}(\vec{r}, 0, L) = \psi_{g, n}(\vec{r}, L).\\
\end{equation}

\noindent
Any permutation $\sigma$ applied to $\vec{r}$ leaves the Hilbert function $\psi_{g, n}(\vec{r}, L)$ unchanged:
\begin{equation}
\psi_{g, n}(\vec{r}, L) = \psi_{g, n}(\sigma(\vec{r}), L).\\
\end{equation}

\noindent
The Hilbert functions $\psi_{g, n}(\vec{r}, L)$ have a so-called ``factorization'' property:
\begin{equation}
\psi_{g, n}(r_1, r_2, \ldots, r_n, L) = \sum_{0 \leq a \leq L} \psi_{g, n-1}(a, r_3, \ldots,r_n, L)\psi_{0, 3}(a, r_1, r_2, L).\\
\end{equation}

\end{proposition}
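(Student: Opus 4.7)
The plan is to prove each of the three identities combinatorially from the polytope model, exploiting Proposition~\ref{indep}, which gives $\psi_{g,n}(\vec{r}, L) = |P_{\Gamma}(\vec{r}, L)|$ for any trivalent graph $\Gamma$ with $\beta_1(\Gamma) = g$ and $n$ labeled leaves. The freedom to choose $\Gamma$ is what lets each identity reduce to a local contraction or splitting in the graph.

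For vacuum propagation, pick a graph $\Gamma$ with $n+1$ leaves so that $\ell_{n+1}$ meets a trinode $v$ whose two other incident edges $e, f$ are distinct (which is possible since any $(\beta_1, n+1)$ is realized by such a graph). Setting $w(\ell_{n+1}) = 0$ collapses the triangle inequality at $v$ to $w(e) = w(f)$; deleting $v$ and $\ell_{n+1}$ and identifying $e$ with $f$ produces a trivalent graph $\Gamma''$ with the same first Betti number $g$ and with $n$ leaves, and $w \mapsto w|_{\Gamma''}$ gives a lattice bijection $P_{\Gamma}(\vec{r}, 0, L) \leftrightarrow P_{\Gamma''}(\vec{r}, L)$. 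Permutation invariance is even more immediate: relabeling the leaves of $\Gamma$ by $\sigma$ gives a graph $\Gamma^{\sigma}$ with the same $(\beta_1, n)$ for which tautologically $|P_{\Gamma^{\sigma}}(\vec{r}, L)| = |P_{\Gamma}(\sigma(\vec{r}), L)|$, and applying Proposition~\ref{indep} to $(\Gamma, \Gamma^{\sigma})$ finishes the argument.

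The substantive identity is factorization. Choose $\Gamma$ so that $\ell_1, \ell_2$ meet a common trinode $v$ with third (internal) edge $e$; removing $v, \ell_1, \ell_2$ and promoting $e$ to a leaf produces $\Gamma'$ of Betti number $g$ with $n-1$ leaves. For each $w \in P_{\Gamma}(\vec{r}, L)$ set $a = w(e)$: the local constraints at $v$ together with the lattice condition on $\mathcal{M}_{\Gamma}$ force $a \in \{0, 1, \ldots, L\}$ with $r_1 + r_2 + a$ even, and in that case the restriction of $w$ to the trinode at $v$ is counted by $\psi_{0,3}(a, r_1, r_2, L) \in \{0, 1\}$ while the restriction to $\Gamma'$ is counted by $\psi_{g, n-1}(a, r_3, \ldots, r_n, L)$. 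Any pair of compatible restrictions glues back to a unique element of $P_{\Gamma}(\vec{r}, L)$, so summing over $0 \le a \le L$ gives the claimed identity, with parity-violating $a$ contributing automatically-zero factors.

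The main obstacle is verifying that the factorization bijection is honest: one has to check that the parity and triangle-inequality constraints at the second trinode meeting $e$ coincide in $\Gamma$ and in $\Gamma'$, so that the $\Gamma'$-data truly can be read from the $\Gamma$-data and vice versa, and that the ``$w(e) \le L$'' bound is automatic from the trinode conditions in either graph. Once this is pinned down, all three identities follow from the polytope description, which provides a self-contained substitute for the conformal-blocks versions of vacuum propagation, symmetry, and factorization recorded in \cite{M4}.
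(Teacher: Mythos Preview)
Your argument is correct, and it differs from the paper's treatment in that the paper does not actually prove Proposition~\ref{factorization} at all: the sentence introducing it reads ``We recall three facts about $\psi_{g,n}(\vec{r},L)$ which follow from properties of the algebra of conformal blocks $\mathcal{V}^{\dagger}_{C,\vec{p}}$ (see \cite{M4}),'' and no further justification is given. Vacuum propagation, $S_n$-symmetry, and factorization are thus imported wholesale as structural properties of conformal blocks from \cite{M4}.

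Your route instead works directly with the lattice-point counts $|P_\Gamma(\vec{r},L)|$, using the graph-independence in Proposition~\ref{indep} to choose a convenient $\Gamma$ for each identity and then performing a local graph surgery (contracting a leaf with weight $0$, relabeling leaves, or splitting off a trinode). The checks you flag---that the bound $w(e)\le L$ is automatic from the trinode inequalities, and that parity and triangle constraints at the far endpoint of the severed edge coincide in $\Gamma$ and $\Gamma'$---all go through as you describe. The payoff is a proof internal to the combinatorial framework already set up in Section~\ref{poly}; the only caveat is that the identification $\psi_{g,n}(\vec{r},L)=|P_\Gamma(\vec{r},L)|$ and Proposition~\ref{indep} itself still rest on the degeneration from \cite{M4}, so the dependence on that reference is relocated rather than removed.
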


\begin{remark}
``Factorization'' as in Proposition \ref{factorization} above was used to define an interesting convolution operation on the Hilbert functions we consider here in \cite[Definition 3.30]{BW}. 
\end{remark}

Note that Proposition \ref{factorization} implies that the function $\phi^{g, n}_{\vec{r}, L}(N)$ does not depend on the order of $\vec{r}$, and $\phi^{g, n+1}_{\vec{r}, 0, L}(N) = \phi^{g, n}_{\vec{r}, L}(N)$.   We will need one further symmetry operation, which is captured in the following proposition.

\begin{proposition}\label{Sym}
For any $\vec{r} = (r_1, r_2, \ldots, r_n)$ and $L$, we have
\begin{equation}
\psi_{g, n}(r_1, r_2, \ldots, r_n, L) = \psi_{g, n}(L-r_1, L-r_2, r_3, \ldots, r_n, L),\\
\end{equation}
\begin{equation}
\phi^{g, n}_{r_1, r_2, \ldots, r_n, L}(N) = \phi^{g, n}_{L- r_1, L- r_2, r_3,\ldots, r_n, L}(N).\\
\end{equation}

\end{proposition}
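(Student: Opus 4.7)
My plan is to deduce the identity by peeling off $r_1$ and $r_2$ using the factorization formula from Proposition \ref{factorization}, applying the trinode symmetry of Lemma \ref{sym}, and reassembling. Concretely, factorization gives
\begin{equation}
\psi_{g,n}(r_1, r_2, \ldots, r_n, L) = \sum_{0 \leq a \leq L} \psi_{g, n-1}(a, r_3, \ldots, r_n, L)\,\psi_{0,3}(a, r_1, r_2, L).
\end{equation}
The coefficient $\psi_{0,3}$ is the characteristic function of $P_3(L)$, and is symmetric in its three arguments (by the permutation identity in Proposition \ref{factorization}), so
\begin{equation}
\psi_{0,3}(a, r_1, r_2, L) = \psi_{0,3}(r_1, r_2, a, L).
\end{equation}

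Now I would invoke the map $C_L(x,y,z) = (L-x, L-y, z)$ of Lemma \ref{sym}, which permutes $P_3(L)$ and therefore satisfies $\psi_{0,3}(r_1, r_2, a, L) = \psi_{0,3}(L-r_1, L-r_2, a, L)$. Using permutation symmetry once more puts this back in the form $\psi_{0,3}(a, L-r_1, L-r_2, L)$. Substituting into the factorization sum gives
\begin{equation}
\sum_{0 \leq a \leq L} \psi_{g, n-1}(a, r_3, \ldots, r_n, L)\,\psi_{0,3}(a, L-r_1, L-r_2, L),
\end{equation}
which by factorization applied in reverse equals $\psi_{g,n}(L-r_1, L-r_2, r_3, \ldots, r_n, L)$. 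This proves the first identity.

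For the second identity I would use the relation $\phi^{g,n}_{\vec{r}, L}(N) = \psi_{g,n}(N\vec{r}, NL)$ recorded in the introduction and apply the first identity to the scaled tuple $(Nr_1, Nr_2, \ldots, Nr_n, NL)$, obtaining
\begin{equation}
\phi^{g,n}_{r_1,\ldots, r_n, L}(N) = \psi_{g,n}(NL - Nr_1, NL - Nr_2, Nr_3, \ldots, Nr_n, NL) = \phi^{g,n}_{L-r_1, L-r_2, r_3, \ldots, r_n, L}(N).
\end{equation}

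There is no real obstacle here; the only thing to be careful about is keeping track of which arguments of $\psi_{0,3}$ get swapped by $C_L$ versus by the permutation symmetry, since $C_L$ acts only on the first two coordinates while the factorization naturally places $a$ in the first slot. The proof reduces the symmetry statement for arbitrary $g$ and $n$ to the trinode case $\psi_{0,3}$, where it is immediate from the elementary affine symmetry of $\mathcal{P}_3(L)$.
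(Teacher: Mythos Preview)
Your proposal is correct and follows essentially the same approach as the paper: factor off $r_1,r_2$ via Proposition~\ref{factorization}, apply the trinode symmetry of Lemma~\ref{sym} to $\psi_{0,3}$, and factor back. You are actually more careful than the paper in explicitly permuting $(a,r_1,r_2)\to(r_1,r_2,a)$ before applying $C_L$ so that the map acts on the intended coordinates; the paper's proof writes $\psi_{0,3}(a,r_1,r_2,L)=\psi_{0,3}(a,L-r_1,L-r_2,L)$ directly, tacitly combining $C_L$ with a permutation.
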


\begin{proof}
The second equation follows from the first.  By Lemma \ref{sym} and Proposition \ref{factorization} we can compute:
\begin{equation}
\psi_{g, n}(r_1, r_2, \ldots, r_n, L) = \sum_{0 \leq a \leq L} \psi_{g, n-1}(a, \ldots, r_n, L)\psi_{0, 3}(a, r_1, r_2, L) =\\ 
\end{equation}
$$\sum_{0 \leq a \leq L} \psi_{g, n-1}(a, \ldots, r_n, L)\psi_{0, 3}(a, L- r_1, L- r_2, L) =  \psi_{g, n}(L-r_1, L-r_2, \ldots, r_n, L).$$

\end{proof}

\begin{remark}
Using  \cite[Equation 8]{StXu}, it is possible to deduce Proposition \ref{Sym} in the case $g = 0$ by applying permutations and the so-called Cremona transformations from the birational geometry of moduli spaces of rank $2$ parabolic vector bundles. 

\end{remark}

These symmetries, and the fact that the Gorenstein property depends only on the Hilbert function in this context, allows us to reduce the proof of Theorem \ref{main} to two cases. 

\begin{proposition}
Let $g \geq 2$ and $n \geq 1$, then the Hilbert function $\psi_{g, n}(\vec{r}, L)$ coincides
with $\psi_{g, n'}(\vec{r'}, L)$ where either all entries satisfy $0 < r_i' < L$ or $n' = 1$ and $r_1' = L$. 
\end{proposition}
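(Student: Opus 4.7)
The strategy is to repeatedly apply the three identities at our disposal---vacuum propagation and permutation invariance from Proposition \ref{factorization}, together with the pairwise complement $\psi_{g,n}(r_1, r_2, \ldots, r_n, L) = \psi_{g,n}(L - r_1, L - r_2, \ldots, r_n, L)$ from Proposition \ref{Sym}---to eliminate every boundary entry $r_i \in \{0, L\}$ from the weight vector. Each of these operations preserves the Hilbert function, so whatever configuration we end at satisfies $\psi_{g, n'}(\vec{r}', L) = \psi_{g, n}(\vec{r}, L)$; in particular we take $L' = L$ throughout.

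First I would iteratively remove every $r_i = 0$: permutation invariance places a zero entry in the last coordinate, and vacuum propagation then strips it off, decreasing $n$ by one. After finitely many such steps we may assume every surviving entry lies in $(0, L]$.

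Next I would eliminate the entries equal to $L$. Permutation invariance combined with Proposition \ref{Sym} allows the complement map $(r_i, r_j) \mapsto (L - r_i, L - r_j)$ to be applied to any pair of coordinates. If there exists an $L$-valued entry together with a strictly interior entry $r_j \in (0, L)$, I pair them; the complement produces $(0, L - r_j)$, the value $L - r_j$ is still strictly interior, and the new $0$ is then deleted by vacuum propagation. This decreases the number of $L$-entries by one while preserving the supply of interior entries. If instead only $L$-valued entries remain, pair them two at a time; the complement sends $(L, L)$ to $(0, 0)$, after which both zeros are removed by vacuum propagation.

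The procedure terminates because each step strictly decreases either $n$ or the number of boundary entries. The resulting weight vector either has every entry in $(0, L)$ (possibly with $n' = 0$, in which case the condition $0 < r_i' < L$ holds vacuously) or consists of a single unpaired entry equal to $L$, which is the exceptional case $n' = 1$, $r_1' = L$ allowed by the statement. I do not foresee a real obstacle here: the argument is a finite symbolic reduction built directly from the identities already proved. The one point worth noting is that pairing an $L$ with an interior $r_j$ keeps $L - r_j$ strictly interior, so the pool of interior entries is never diminished until every $L$-entry has been processed, which is what allows the process to funnel cleanly into one of the two asserted terminal configurations.
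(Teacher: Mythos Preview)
Your proposal is correct and is precisely the argument the paper has in mind: the paper states this proposition without proof, indicating only that ``these symmetries\ldots allow us to reduce the proof of Theorem \ref{main} to two cases,'' and your reduction---permute boundary entries into position, complement pairs via Proposition \ref{Sym}, strip zeros by vacuum propagation---is the natural elaboration of that sentence. The observation that pairing an $L$ with a strictly interior entry preserves the supply of interior entries is exactly the bookkeeping needed to guarantee the process funnels into one of the two terminal configurations, and your handling of the $n'=0$ case (vacuous interior condition) is consistent with the paper's treatment of the unmarked case via Theorem \ref{acbGor}.
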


The rest of the paper is dedicated to handling these two cases.

\section{The argument for generic $\vec{r}$}\label{gen}

Now we prove that the semigroup algebra $\C[S_{\Gamma_{g,n}, \vec{r}, L}]$ cannot be Gorenstein if $g \geq 2$, $n \geq 1$, and $0 < r_i < L$.  We argue as follows.  By Proposition \ref{gradedaffinesemigroupgorenstein}, if  $\C[S_{\Gamma_{g,n}, \vec{r}, L}]$ is Gorenstein, some Minkowski sum $M\circ \P_{\Gamma_{g, n}}(\vec{r}, L)$ contains a unique interior lattice point $v$.  By Proposition \ref{interiorlattice}, we must be able to write $v = u + \omega_{\Gamma_{g, n}}$, where $u \in \P_{\Gamma_{g, n}}(\vec{r} - \vec{2}, L -4)$.  Moreover, as $v$ is the unique interior lattice point of $\P_{\Gamma_{g, n}}(\vec{r}, L)$, $u$ must be the unique lattice point of  $ \P_{\Gamma_{g, n}}(\vec{r} - \vec{2}, L -4)$.  By Lemma \ref{morethanonelpoint} below this is not possible, in particular any $\P_{\Gamma_{g, n}}(\vec{r}, L)$ which contains one lattice point must contain two or more lattice points. Consequently, $\C[S_{\Gamma_{g, n}, \vec{r}, L}]$ is not Gorenstein by Proposition \ref{gradedaffinesemigroupgorenstein}.

\begin{lemma}\label{morethanonelpoint}
Let $n \ge 1$ and $g \ge 2$, then if $\P_{\Gamma_{g, n}}(\vec{r}, L)$ contains a lattice point, it contains at least $2$ lattice points. 
\end{lemma}

\begin{proof}
 
Let $e$ be the edge in $\Gamma_{g, n}$ which separates the tree portion from the portion with loops (see Figure \ref{NonGorenstein-8alt}).

\begin{figure}[htbp]
\centering
\includegraphics[scale = 0.35]{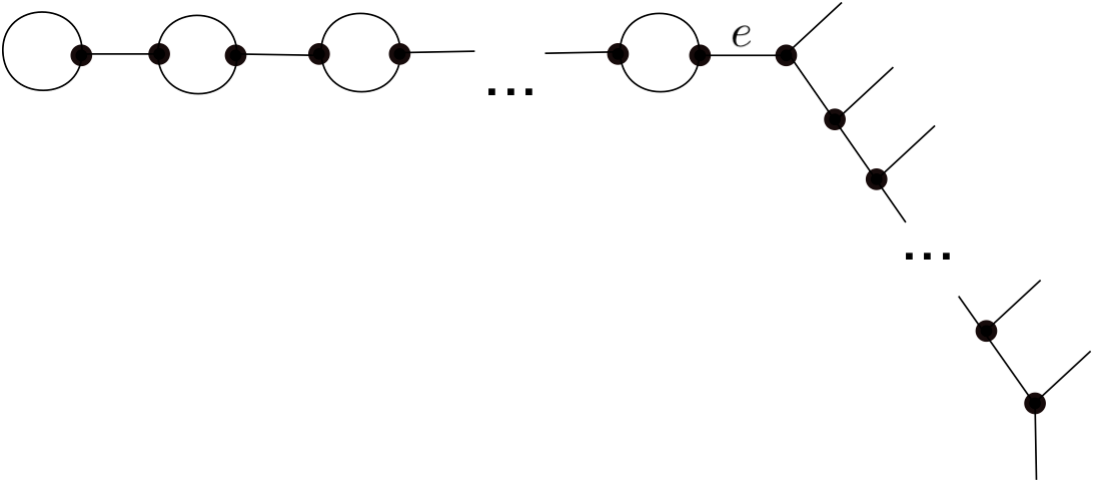}
\caption{}
\label{NonGorenstein-8alt}
\end{figure}

\noindent
Let $w$ be a lattice point in $\P_{\Gamma_{g, n}}(\vec{r}, L)$, and consider $w(e)$. If $w(e)$ is odd, then we are done, since the two edges adjacent to $e$ must have different parities (since $w(e)+x+y \in 2\Z$), hence we can swap them and create a distinct lattice point (as shown in Figure \ref{twolatticepts}).

\begin{figure}[htbp]
\centering
\includegraphics[scale=0.55]{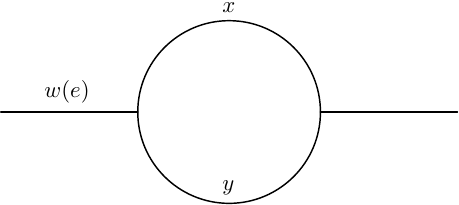}\\
\includegraphics[scale=0.55]{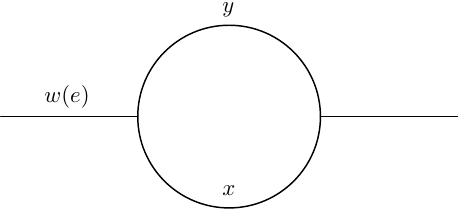}
\caption{}
\label{twolatticepts}
\end{figure}

\noindent
If $w(e)>0$ is even, then $w(e)\le L$ by the triangle inequalities (recall Definition \ref{polydef}). Hence, we can construct the  two graphs shown in Figure \ref{eventwolatticepts}, which clearly define distinct lattice points.

\begin{figure}[htbp]
\centering
\includegraphics{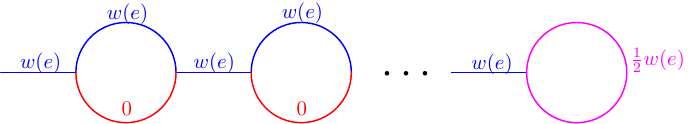}
\includegraphics{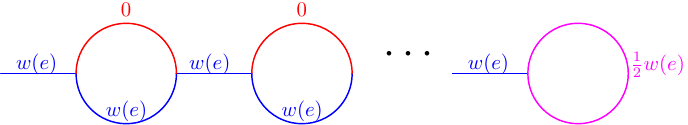}
\caption{}
\label{eventwolatticepts}
\end{figure}

\noindent

If $w(e)=0$, then we can create the graphs shown in Figure \ref{zerotwolatticepts}, which again describe distinct lattice points.

\begin{figure}[htbp]
\centering
\includegraphics[scale=0.5]{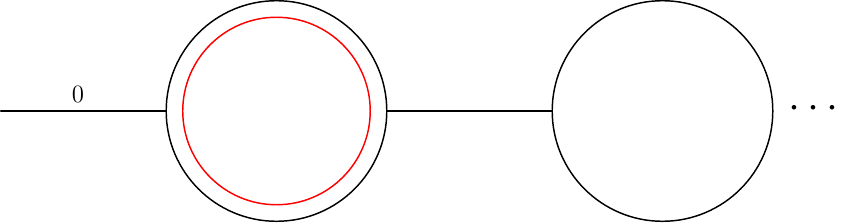}
\includegraphics[scale=0.5]{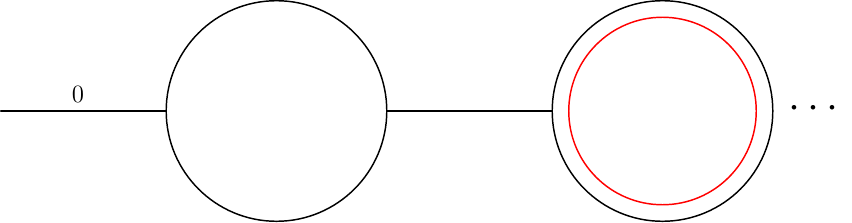}
\caption{}
\label{zerotwolatticepts}
\end{figure}

\noindent
Hence, given a lattice point $w \in P_{\Gamma_{g, n}}(\vec{r}, L)$, we can always find a distinct $w' \in P_{\Gamma_{g, n}}(\vec{r}, L)$. 
\end{proof}








 

\section{The case of one marked point}\label{1mark}

It remains only to check the part of Theorem \ref{main} captured in the following proposition.

\begin{proposition}
Let $C$ be a curve of genus $g \geq 2$ and $L =1, 2,$ or $4$. Then the algebra $R_{C, p}(L, L)$ is Gorenstein for generic curves $(C, p) \in \mathcal{M}_{g, 1}$.  If $L \neq 1, 2$, or $4$, then $R_{C, p}(L, L)$ is not Gorenstein for all curves $(C, p)$. 
\end{proposition}

\begin{proof}
We employ the graph $\Gamma_{g, 1}$ and we note that $\mathcal{P}_{\Gamma_{g,1}}(L, L)$ is isomorphic to the polytope $\mathcal{P}_{\Gamma_{g-1, 2}}(L)$ intersected with the hyperplane defined by requiring the sum of the leaf-weights to be $L$ (for example, see $\Gamma_{g,1}, \Gamma_{g-1,2}$ below in Figure \ref{3132} where $g=3$). Let $a$ and $b$ be the two leaf-edges of $\Gamma_{g-1, 2}$.  By taking any point $u$ with $u(a)$ and $u(b)$ nonzero, $u(a) + u(b) = L$, and all other edges assigned $\epsilon$ with $3\epsilon < 2L$, we construct an interior point of $\mathcal{P}_{\Gamma_{g-1, 2}}(L)$ which is also in $\mathcal{P}_{\Gamma_{g, 1}}(L, L)$.  It follows that interior points of the latter polytope are those which make all inequalities strict, and lattice interior points must have the point corresponding to $\omega_{\Gamma_{g-1, 2}}$ as a summand.  An argument similar to the proof of Theorem \ref{acbGor} then implies that this algebra is Gorenstein for $L = 1, 2$ and $4$. 

If $L\neq 1$ is odd, we are forced to assign to $a$ and $b$ different numbers $u(a) \neq u(b)$.  If $u$ were interior, then we could define another interior point $u'$ by setting $u'(e) = u(e)$ for $e \neq a, b$ and $u(a) = u'(b)$ and $u(b) = u'(a)$.  If $L$ is even and $> 4$, then we may change the label assigned to the end loop of $\Gamma_{g, 1}$ by $\omega_{\Gamma_{g-1, 2}}$ from $2$ to $3$ to obtain a second interior point.  
\end{proof}

\begin{figure}[htbp]
\centering
\includegraphics[scale=0.2]{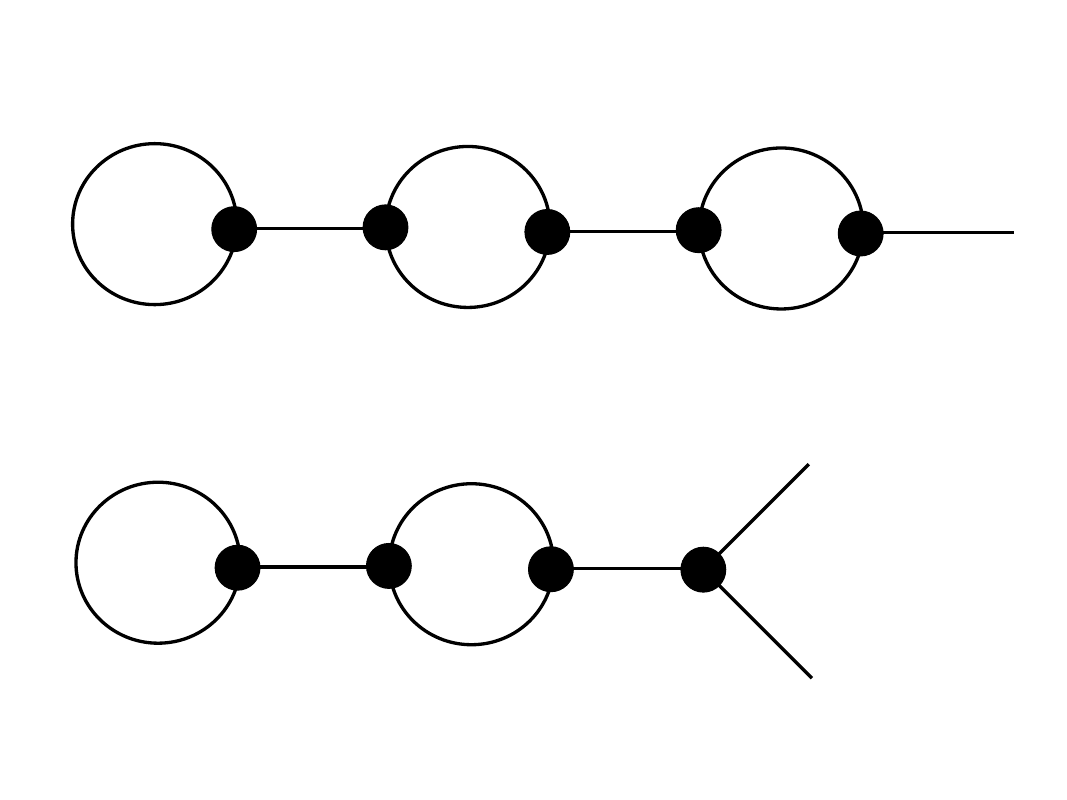}
\caption{The graphs $\Gamma_{3, 1}$ and $\Gamma_{2, 2}$.}
\label{3132}
\end{figure}

\bibliographystyle{alpha}
\bibliography{Biblio}

\bigskip
\noindent
Theodore Faust:\\
George Mason University\\ 
Fairfax, VA 22030 USA 

\bigskip
\noindent
Christopher Manon:\\
Department of Mathematics,\\ 
George Mason University,\\ 
Fairfax, VA 22030 USA

\end{document}